\theoremstyle{definition}
\newtheorem{Def}{Definition}[section]
\theoremstyle{plain}
\newtheorem{Thm}[Def]{Theorem}
\theoremstyle{definition}
\theoremstyle{plain}
\newtheorem{Lemma}[Def]{Lemma}
\theoremstyle{remark}
\newtheorem{rmk}[Def]{Remark}
\theoremstyle{definition}
\newtheorem{conj}[Def]{Conjecture}
\theoremstyle{plain}
\newtheorem{Corollary}[Def]{Corollary}
\newcommand{\numberset}{\mathbb}
\newcommand{\deb}{\bar{\partial}}
\newcommand{\R}{\numberset{R}}
\newcommand{\C}{\numberset{C}}
\newcommand{\n}{\nabla}
\newcommand{\de}{\partial}
\newcommand{\proj}{\mathbb{P}}
\title{Divisorial properties and special metrics on hypercomplex twistor spaces}
\author{Alberto Pipitone Federico\thanks{Dipartimento di Matematica, Universit\`a di Roma ``Tor Vergata'', Via della Ricerca Scientifica 1, 00133 Roma, Italy \texttt{pipitone@mat.uniroma2.it}}}
\date{\vspace{-3ex}}
\begin{document}
	\maketitle
	\begin{abstract}
		We prove that the general fiber of a compact hypercomplex twistor space with a K\"{a}hler fiber has no divisors nor curves. This is first used to prove that, under the same assumption, the trascendental degree of the field of meromoprhic functions is one. The same result allows to prove that these spaces admit no K\"{a}hler and not even pluriclosed metrics.
	\end{abstract}
	\section*{Introduction}
	The first appearence of twistor spaces dates back to the mathematical physics work by R.  Penrose \cite{penrose1967twistor}, where they are used to transform various equations on a real four dimensional manifold $M$ into holomorphic objects on a complex manifold, the twistor space $Tw(M)$. Since then they have been deeply studied both in physics and geometry. \\
	Among the many generalizations of twistor spaces which have appeared over the years, the most important is arguably in hyperk\"{a}hler geometry. In this context twistor spaces are a powerful tool, since they provide an explicit global deformation $\mathcal{Z} \to \proj^1$. Moreover, if $D$ is the period domain associated with a hyperk\"{a}hler manifold $X$, any two points in $D$ can be connected by a chain of twistor lines, which is an essential argument in the proof of the global Torelli theorem (see \cite{huybrechtsglobal} for details). \\
	The hyperk\"{a}hler twistor construction extends to hypercomplex manifolds, a natural generalization of hyperk\"{a}hler manifolds which first appeared in Obata's work \cite{obata} and has been quite studied since then. Also in this case twistor spaces are strongly related to deformation theory: in \cite{poondeformations}, for example, it is shown that deformations of hypercomplex structures on $X$ correspond to the real deformations (a deformation is real if it preserves the antiholomorphic involution of $Tw(X)$) of the holomorphic twistor map $\pi: Tw(X) \to \proj^1$. \\
	A first interesting and unsolved problem about hypercomplex twistor spaces is to determine if, given a hypercomplex manifold $(X,I,J,K)$ with $(X,I)$ holomorphically symplectic, $(X,I,J,K)$ is itself hyperk\"{a}hler. In \cite{verbitsky2004} it was conjectured that this always holds. \\
	Hypercomplex twistor spaces, though, are also interesting objects on their own, being an example of non projective manifolds with plenty of rational curves (the sections of $\pi: Tw(X) \to \proj^1 $) and
	they have been quite studied from the metric point view. In non-K\"{a}hler geometry, indeed, special metrics have been receiving a lot of attention in the last decades, as some of them have very peculiar properties. Balanced metrics, for example, allows to define the degree of a coherent sheaf as a topological invariant and, as a consequence, to have a well-behaved stability theory.  We refer to \cite{egidi2001special}, \cite{grantcharov} for a survey. For twistor spaces, in particular, in \cite{kaledin1998non} it was proved that the natural metric on a hyperk\"{a}hler twistor space is pluripositive and balanced, whereas in \cite{tomberg2015twistor} balanced metrics over hypercomplex twistor spaces are explicitly constructed. \\
	
	We briefly outline the structure of this work. In the first section we put together Verbitsky's results contained in \cite{verbitsky2004},  \cite{verbitsky2007}, \cite{Soldatenkov_2012} to get Theorem \ref{fiber}. Here, as well as in Corollary \ref{moi}, the assumption of a K\"{a}hler fiber yields a result analogous to the hyperk\"{a}hler case, which might be considered as some evidence for the conjecture cited above. \\
	In relation to Corollary \ref{moi} we point out the recent result by Y. Gorginyan \cite{yulia}, where it is shown that hypercomplex twistor spaces are never Moishezon. In this work we are able to compute exactly the trascendental degree of $Tw(X)$, if one of the fibers is K\"{a}hler.\\
	In the second section we focus on some metric aspects and generalize the results of \cite{verbitsky2014}, proving that a compact hypercomplex twistor spaces admit neither a K\"{a}hler metric, nor a pluriclosed one. 
	In \cite{verbitsky2014}, the non existence of pluriclosed metrics is proved for twistor spaces of four dimensional ASD manifolds. The conclusion there follows from the well known fact that if such a twistor space admits a K\"{a}hler, then it is isomoprhic to $Tw(S^4 )$ or $Tw(\proj^2)$ (see \cite{hitchinkahlerian}). Existence of K\"{a}hler metric for hypercomplex twistor spaces was not known and is the content of Theorem \ref{Kahler}, which again follows from Theorem \ref{fiber}. This allows to prove the non existence of pluriclosed metrics. An alternative proof is also presented, which relies on Y. Gorginyan's result and allows to prove also the non existence of plurinegative metrics.
	\paragraph*{Acknowledgements} I would like to thank Anna Fino and Gueo Grantcharov for having presented me this problem and for their help in the writing process. I also express my gratitude to Antonio Rapagnetta for his support throughout these months.
	\begin{section}{Main definitions and divisorial properties}
		\subsection{Hypercomplex manifolds}
		A hypercomplex manifold is a differentiable manifold $X$ endowed with three complex structures $I,J,K$ satisfying
		\[
		IJ=K \qquad JK=I \qquad KI=J
		\]
		The integrability of $I,J,K$ is equivalent to the existence of a torsion-free connection $\n$, known as Obata connection, satisfying 
		\[
		\n I=\n J =\n K =0
		\]
		A Riemannian metric $g$ is hyperhermitian if it is compatible with $I,J,K$; it is hyperk\"{a}hler if moreover the three fundamental forms $\omega_I, \omega_J, \omega_K$ are closed, or equivalently if the Obata and the Levi-Civita connections coincide. \\
		A holomorphically symplectic manifold is a complex manifold $(X,I)$ admitting a K\"{a}hler metric with an everywhere non-degenerate holomorphic two form $\alpha$. As a consequence of Yau's theorem and Beuaville's decomposition \cite{beauville}, a compact holomorphically symplectic manifold $(X,I)$ together with a K\"{a}hler class $\omega $ gives rise to a hyperk\"{a}hler manifold $(M,g, I,J,K)$, with $\omega=g (-, I-)$. If $X$ is is simply connected, the triple $(I,J,K)$ is uniquely determined.
		\subsection{Twistor spaces and ample rational curves}
		In this paragraph we define the twistor space for a hypercomplex manifold $(X, I,J,K)$. If $(X, I,J,K)$ admits a hyperk\"{a}hler metric $g$, the associated twistor space will be called hyperk\"{a}hler but the metric does not play any role in the construction. The definition in the hypercomplex setting is indeed the most natural one. 
		\begin{Def}
			The twistor space $Tw(X)$ of $(X, I,J,K)$ is the complex manifold $((X \times \proj^1)_{\R}, \mathcal{I})$, where $(-)_{\R}$ denotes the underlying real differentiable manifold and $\mathcal{I}$ is the complex structure\footnote{A proof of the integrability of $\mathcal{I}$ can be found in \cite{kaledin1996integrability}.} given by
			\[
			\mathcal{I}_{(x, \zeta)}(u,v)=(\zeta.u, i v)
			\]
			for any $(x, \zeta)\in X\times  \proj^1 $, $(u,v) \in T_p^{\R} X \times T_{\zeta}^{\R} \proj^1  $.
		\end{Def}
		The first projection $\pi: Tw(X) \to \proj^1$ is a holomorphic map. The second one $p: Tw(X) \to X$ is not, but each fiber of $p$ is a complex submanifold and is the image of a holomorphic section of $\pi$. Each of them has normal bundle $\mathcal{O}(1)^{\oplus \dim(X)} $. We refer to \cite{hitchin1987} for detailed proofs. \\
		We recall that in general a rational curve $C$ in a complex manifold $M$ is said ample if $N_{C,M}$ is ample\footnote{Recall that a vector bundle $E$ is said ample if the tautological line bundle $\mathcal{O}_E(1)$ is ample on $\proj(E)$.} as a vector bundle. Since any vector bundle on $\proj^1$ splits as a direct sum of line bundles, $E  \backsimeq \bigoplus_{i}^{} \mathcal{O}(a_i) $ is ample if and only if $a_i >0$. Equivalently, $C$ is ample if and only if
		\[
		\label{formula1}
		j^* T_{X} \backsimeq  \mathcal{O}(2) \oplus \bigoplus_{i=1}^{ \dim(M)-1} \mathcal{O}(a_i),  \quad a_i >0
		\]
		where $j: C \xhookrightarrow{} X$ is the embedding, thus $j^*T_X$ is ample too. \\
		Ample rational curves enjoy important deformation properties: embedded deformations of $C$ are unobstructed as $H^1(N_{C,X})=0$, thus $\text{Def}(C\subset X)$ is smooth and
		\[
		T_0 \text{Def}(C\subset X) \backsimeq H^0(N_{C,X})
		\]
		The deformations of $C\subset M$, moreover, cover the whole $X$ (see \cite{kollar2013rational} for a proof). \\
		The $a_i$ clearly determine all the deformations property: for twistor spaces $a_i=1$ for all the $i$, thus one can move $C$ keeping at most one point fixed. For this reason $C$ is called a quasi-line, behaving in the same way of a line in $ \proj^n $.	
		\subsection{Divisorial properties of the fibers}
		Let $(X, I,J,K)$ be a compact hypercomplex manifold. The first result of the paper deals with the subvarieties of the fiber of $	\pi: \mathcal{Z}=Tw(X) \to \proj^1$.
		It is a well-known result that if $X$ is hyperk\"{a}hler the general fiber (i.e. over the complement of a countable subset) of $\pi$ contains no curves nor divisors: it was first proved by Fujiki in \cite{fujiki} for the compact case and then by Verbitsky in \cite{verbitsky2004subvarieties} for the general one.
		\begin{rmk}
			\label{hopf}
			The result is not true in general for hypercomplex twistor spaces: let $S$ be the Hopf surface $S = \C^2 \setminus \{ (0,0) \} / (z,w) \backsim (\lambda z, \lambda w)$, with fixed $\lambda \in \R$, $\lambda\neq 0, 1$. Then $S$ has a natural hypercomplex structure, is elliptic and the twistor family is isotrivial (see \cite{pontecorvo1991hermitian} for the details).
		\end{rmk}
		With additional assumption, though, it is possible to recover the same property.
		\begin{Thm}
			\label{fiber}
			Let $\pi: \mathcal{Z} \to \proj^1$ be the twistor projection. If $(X, \lambda)$ admits a K\"{a}hler metric, for some $\lambda\in \proj^1 $, then the general fiber contains no curves nor divisors.
		\end{Thm}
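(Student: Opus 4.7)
The plan is to reduce to a Fujiki-type cohomological obstruction by combining the $SU(2)$-action on the cohomology of a hypercomplex manifold (as developed by Verbitsky and Soldatenkov in the works cited in the introduction) with the Hodge-theoretic rigidity provided by the K\"ahler fiber. First, I would argue for contradiction: suppose that an uncountable subset of fibers of $\pi: \mathcal{Z} \to \proj^1$ contains a curve or a divisor. Passing to the relative Douady (or Barlet) space of $\pi$, whose set of irreducible components is countable, one can extract a single component whose image is all of $\proj^1$. Shrinking if necessary, this yields a flat family of subvarieties over a Zariski-open $U \subset \proj^1$ whose fiberwise cohomology classes assemble into a locally constant real class $[Y] \in H^{*}(X, \R)$.

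For each $\mu \in U$, the class $[Y]$ is represented by an effective analytic cycle on $(X, I_\mu)$, hence is of Hodge type $(p,p)$ with respect to every $I_\mu$ in the twistor line. By the $SU(2)$-action machinery of Verbitsky--Soldatenkov, a real class that is of type $(p,p)$ for every $I_\mu$ simultaneously is forced to be $SU(2)$-invariant. I would then exploit this at $\lambda$, where a genuine K\"ahler form $\omega_\lambda$ on $(X, I_\lambda)$ is available: the pairing $[Y] \cdot [\omega_\lambda]^{\dim_\C X - p}$ must be strictly positive by the usual positivity of K\"ahler classes on non-trivial effective cycles, whereas the $SU(2)$-invariance of $[Y]$ combined with the way $SU(2)$ permutes $\omega_I, \omega_J, \omega_K$ should force this same pairing to vanish (averaging $\omega_\lambda$ over its $SU(2)$-orbit lands in a subspace orthogonal to $[Y]$). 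This incompatibility yields the contradiction.

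The main obstacle, and the technical heart of the argument, lies in the first step: establishing that a curve or a divisor on a \emph{generic} fiber really does propagate to an analytic family over all of $\proj^1$ with constant cohomology class. In the hyperk\"ahler setting this is packaged with the twistor construction thanks to volume bounds in a globally defined K\"ahler class. In our hypercomplex setting only one fiber is assumed K\"ahler, so the delicate point is to leverage this single K\"ahler metric, together with the Obata connection and the $SU(2)$-action on differential forms, to obtain uniform volume and regularity estimates on nearby fibers sufficient to apply a Douady-type compactness theorem. This is precisely the step where the three cited results from \cite{verbitsky2004}, \cite{verbitsky2007}, and \cite{Soldatenkov_2012} must be threaded together, and I expect the proof to consist essentially of verifying that their hypotheses are met in the present setup.
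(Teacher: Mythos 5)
Your proposal transplants Fujiki's hyperk\"ahler argument, but the two pillars of its endgame --- the Hodge decomposition on every fiber and the $SU(2)$-action on $H^{*}(X,\R)$ --- are precisely what is \emph{not} available here. The generic fiber $(X,I_\mu)$ of a hypercomplex twistor space is non-K\"ahler (that is in effect what the theorem establishes), so there is no Hodge structure with respect to $I_\mu$ and the assertion that ``$[Y]$ is of type $(p,p)$ for every $I_\mu$'' has no meaning. Likewise, while $SU(2)$ does act on \emph{forms} of any hypercomplex manifold, this action descends to cohomology only when it preserves harmonic forms, i.e.\ when the metric is hyperk\"ahler; for a merely HKT or hyperhermitian metric the relevant Laplacians do not coincide and no $SU(2)$-action on $H^{*}(X,\R)$ exists. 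Since the theorem assumes only one K\"ahler fiber, and the hypercomplex structure may a priori be exotic (Conjecture \ref{exoticconj}), you cannot summon a hyperk\"ahler metric to run this machinery --- and if you could, the statement would already be the classical Fujiki--Verbitsky result. The same objection undermines the proposed contradiction via $SU(2)$-averaging of $\omega_\lambda$, which again takes place in a cohomological representation that is not known to exist.

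The paper's actual proof is entirely different and bypasses cohomology: by \cite{verbitsky2004}, the existence of a K\"ahler fiber forces $(X,\lambda)$ to be holomorphically symplectic and $(X,I,J,K)$ to admit an HKT metric; by \cite{verbitsky2007}, these two facts force the holonomy of the Obata connection into $\mathrm{SL}(n,\mathbb{H})$; and by \cite{Soldatenkov_2012}, on such a manifold every analytic subvariety of the general fiber is trianalytic, hence of even complex codimension, which excludes curves and divisors at once. So the three cited results are not, as you anticipated, ingredients for volume bounds and Douady-space compactness in a propagation argument --- they replace the relative Douady space, the locally constant class, and the positivity argument altogether. The structural lesson is that in the hypercomplex setting the ``generic trianalyticity'' is obtained from holonomy restrictions on the Obata connection, not from the variation of Hodge structures along the twistor line.
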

		\begin{proof}
			By \cite{verbitsky2004}, $(X,\lambda)$ is holomorphically symplectic and $(X,I,J,K)$ admits a HKT metric\footnote{A hyperhermitian metric is HKT if $\de_I (\omega_J +i \omega_K)      =0$.}. Under these assumptions, the holonomy of the Obata connection is contained in $\text{SL}(n,\mathbb{H})$, by \cite{verbitsky2007}. Then the claim follows, by \cite{Soldatenkov_2012}, as every analytic subvariety is trianalytic, hence of even codimension.
		\end{proof}
		From this we can describe the divisors on $Tw(X)$, under the assumptions of the existence of a K\"{a}hler fiber. An analogous proof for hyperk\"{a}hler twistor spaces can be found in \cite{verbitsky2012holographyprincipletwistorspaces}.
		\begin{Thm}
			\label{divisor}
			Let $\pi: \mathcal{Z} \to \proj^1$ be the  twistor projection. If one fiber $(X,\lambda)$ admits a K\"{a}hler metric, then any divisor on $\mathcal{Z}$ is a pullback of a divisor on $\proj^1$.
		\end{Thm}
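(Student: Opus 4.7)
The strategy is to reduce to irreducible divisors and exploit Theorem~\ref{fiber} together with elementary fiber-dimension considerations for $\pi$. Since any divisor on $\mathcal{Z}$ is a finite $\Z$-linear combination of prime divisors, and the claim is linear in such a combination, it suffices to show that every irreducible divisor $D\subset \mathcal{Z}$ coincides set-theoretically with a fiber of $\pi$. As $D$ is irreducible and $\pi:\mathcal{Z}\to\proj^1$ is proper, the image $\pi(D)$ is an irreducible closed subvariety of $\proj^1$, so it is either a single point or all of $\proj^1$.

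In the first case, if $\pi(D)=\{\mu\}$, then $D\subseteq \pi^{-1}(\mu)$. Since $X$ is connected, so is the underlying smooth manifold of each fiber (which is diffeomorphic to $X$), and since $\pi$ is a holomorphic submersion the fibers are smooth and therefore irreducible. Comparing dimensions, $\dim D=\dim\mathcal{Z}-1=\dim\pi^{-1}(\mu)$, so $D=\pi^{-1}(\mu)=\pi^{*}(\mu)$, i.e.\ $D$ is the pullback of the divisor $[\mu]$ on $\proj^1$.

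In the second case, suppose $\pi|_D:D\to\proj^1$ is surjective. By the fiber-dimension theorem, for a general $t\in\proj^1$ the scheme-theoretic intersection $D\cap\pi^{-1}(t)$ is nonempty of pure dimension $\dim D-1=\dim\pi^{-1}(t)-1$. Because $D\neq \mathcal{Z}$, this intersection is a proper subvariety of $\pi^{-1}(t)$ of codimension one, i.e.\ a divisor on the fiber. This directly contradicts Theorem~\ref{fiber}, which guarantees that for $t$ outside a countable subset the fiber $\pi^{-1}(t)$ contains no divisors. Hence this case cannot occur, and combining the two cases proves the result.

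The proof is almost entirely formal once Theorem~\ref{fiber} is in hand; the main point of care is Step~3, where one must justify that the scheme-theoretic fibers of $\pi$ are irreducible (which reduces to connectedness of $X$, a standing assumption), and Step~4, where one must apply Theorem~\ref{fiber} at a value of $t$ that simultaneously lies in the generic locus of $\pi|_D$ and in the countable-complement locus provided by Theorem~\ref{fiber} — both are countable-complement conditions on $\proj^1$, so a common generic $t$ exists and the argument goes through.
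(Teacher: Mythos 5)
Your proof is correct, and it reaches the same conclusion as the paper via the same essential input (Theorem~\ref{fiber}), but the implementation is genuinely different. The paper works directly with an effective divisor $D$ and applies a transversality (Sard-type) argument to the smooth locus of $\mathrm{supp}(D)$: for almost every $\zeta$ the intersection with $\pi^{-1}(\zeta)$ is either empty or transversal of codimension exactly one in the fiber; Theorem~\ref{fiber} forbids the second option generically, so the generic intersection is empty and $\mathrm{supp}(D)$ is a finite union of fibers. You instead decompose into prime divisors, use properness of $\pi$ (Remmert) to get the dichotomy $\pi(D)=\{\mu\}$ versus $\pi(D)=\proj^1$, kill the dominant case with the fiber-dimension theorem for $\pi|_D$ (every fiber of a dominant map from an irreducible variety to a curve is a nonempty hypersurface, so a general fiber of $\pi$ would contain a divisor, contradicting Theorem~\ref{fiber}), and settle the vertical case by comparing dimensions inside the irreducible fiber. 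Your route buys a few things: it avoids the slightly delicate transversality step, it handles arbitrary (not just effective) divisors explicitly through the reduction to primes, and it makes visible two hypotheses the paper leaves implicit --- connectedness of $X$ (needed for irreducibility of the fibers, and genuinely necessary: for disconnected $X$ a single fiber component would be a counterexample) and the need to choose $t$ simultaneously generic for $\pi|_D$ and outside the countable bad set of Theorem~\ref{fiber}. The paper's version, in turn, is more directly geometric and dispenses with the decomposition into irreducible components. Both are complete proofs; yours is arguably the cleaner write-up of the same underlying mechanism.
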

		\begin{proof}
			Let $D$ be an effective divisor. By transversality theorem, the smooth part $ \text{supp}(D)^{sm} \subset \text{supp}(D) $ intersects $\pi^{-1}(\zeta) $ transversally by almost all the $\zeta \in \proj^1 $, in particular the intersection is either empty or has 
			\[
			\text{codim}_{\pi^{-1}(\zeta)}   \big( \pi^{-1}(\zeta) \cap \text{supp}(D)^{sm} \big) \geq 1
			\]
			If the equality holds for a fiber over $\zeta_0$, then $D \cap {\pi^{-1}(\zeta_0)} $ will contain a divisor $D_0$ (it is not excluded that $ \pi^{-1}(\zeta_0)$ contains components of $D$ of higher codimension), since $\text{supp}(D)^{sm}$ is open and dense inside $\text{supp}(D)$). \\
			Thus by Theorem \ref{fiber} we have that for almost all the fibers the above inequality is strict or the intersection is empty. Since $\text{codim}_{\mathcal{Z}} (D)=1 $, though, the only possibility is that $ \pi^{-1}(\zeta) \cap \text{supp}(D)$ is empty for almost all the fibers. \\
			Hence the only remaining possibility is clearly that $\text{supp}(D)$ is the union of some special fibers. This can happen only for finitely many $\zeta$, since by definition $D$ is a divisor and so are all the $\pi^{-1}(\zeta)$. 
		\end{proof}
		The following result is then straightforward.
		\begin{Corollary}\label{moi}
			A compact hypercomplex twistor space $\mathcal{Z}$ admitting one K\"{a}hler fiber $(X, \lambda) $ has $\text{deg}_{\text{tr}}( M_{\mathcal{Z}}  )= 1$.
		\end{Corollary}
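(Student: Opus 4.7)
The plan is to establish the two bounds $\text{deg}_{\text{tr}}(M_{\mathcal{Z}}) \geq 1$ and $\text{deg}_{\text{tr}}(M_{\mathcal{Z}}) \leq 1$ separately. The lower bound is immediate: the holomorphic surjection $\pi: \mathcal{Z} \to \proj^1$ induces an injection $\pi^*: \mathbb{C}(\zeta) \hookrightarrow M_{\mathcal{Z}}$, so the transcendence degree is at least one.

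For the upper bound, I would pick an arbitrary $f \in M_{\mathcal{Z}}$ and show that $f$ lies in $\pi^* \mathbb{C}(\proj^1)$. The key input is Theorem \ref{divisor}: both effective divisors $(f)_0$ and $(f)_\infty$ are pullbacks of divisors on $\proj^1$, hence the union of their supports is contained in finitely many fibers $\pi^{-1}(\zeta_1), \dots, \pi^{-1}(\zeta_N)$. For any other $\zeta$, the restriction $f|_{X_\zeta}$ is then a well-defined holomorphic function on a compact complex manifold, and so must be constant.

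It then remains to descend $f$ to $\proj^1$. The assignment $\zeta \mapsto f|_{X_\zeta}$ defines a holomorphic function on $\proj^1 \setminus \{\zeta_1, \dots, \zeta_N\}$, which I would extend to a rational function $g \in \mathbb{C}(\zeta)$, the orders of the zeros and poles of $f$ along the special fibers producing the corresponding orders for $g$ at the $\zeta_i$. By construction $f$ and $\pi^* g$ agree off finitely many fibers, hence coincide as meromorphic functions on $\mathcal{Z}$, yielding $M_{\mathcal{Z}} = \pi^* \mathbb{C}(\proj^1)$ and transcendence degree exactly one. I do not foresee a serious obstacle once Theorem \ref{divisor} is in hand; the only point that deserves explicit justification is this meromorphic extension across the finite set $\{\zeta_i\}$, which boils down to a standard removable-singularity argument on a curve.
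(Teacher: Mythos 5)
Your proof is correct and takes exactly the route the paper intends: the paper leaves Corollary \ref{moi} as an immediate (``straightforward'') consequence of Theorem \ref{divisor}, and your argument---the zero and pole divisors of any $f\in M_{\mathcal{Z}}$ are pullbacks of divisors on $\proj^1$, so $f$ is constant on all but finitely many fibers and descends to a rational function $g$ with $f=\pi^*g$, giving $M_{\mathcal{Z}}=\pi^*\C(\proj^1)$---is precisely that deduction spelled out. The one step you flag, the meromorphic extension of $g$ across the finitely many special $\zeta_i$, is indeed the only detail requiring care, and it goes through as you describe (e.g.\ by noting that $\pi^*(t-\zeta_i)^{m_i}\cdot f$ is holomorphic near $\pi^{-1}(\zeta_i)$ and restricting along a section of $\pi$).
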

		\begin{rmk}
			The twistor space $\mathcal{Z}$ of the Hopf surface $S$ defined in \ref{hopf} is an elliptic fibration over $\proj^1 \times \proj^1$. The isotriviality of the twistor family $q: \mathcal{Z} \to \proj^1$ allows to define a map $\mathcal{Z}\to \proj^1 \times \proj^1 $ by $\zeta \mapsto  (q(\zeta), p(\zeta) )$ where $p:S \to \proj^1 $ is the elliptic fibration of $S$. Then the pullback of $(f,1)$ and $(1,g) $, $f,g \in M_{\proj^1} $ shows that $\text{deg}_{\text{tr}}( M_{\mathcal{Z}}  ) \geq 2$. The equality must hold by Gorginyan's result.
		\end{rmk}
		\subsection{Exotic stuctures}
		It is a well-known fact that small deformations of a holomorphically symplectic manifold $(X,\lambda)$ are holomorphically symplectic and that if $(X,\lambda)$ is simply connected, this holds for all the K\"{a}hler deformations (\cite{beauville}). \\
		In our case it is certainly true that over a small neighborhood of $\lambda\in \proj^1$ the fibers are all holomorphically symplectic, but it is not clear if one can extend this to all the fibers of the family. Also, it is not known if even locally this deformation of $(X,\lambda)$ is isomorphic to a hyperk\"{a}hler twistor deformation. \\
		This is related to the existence of exotic hypercomplex structurues.
		\begin{Def}
			A hypercomplex structure $(X,I,J,K)$ is said exotic if $(X,I)$ is holomorphically symplectic, but for no choice of $\alpha \in \text{Kahl}(X)$ the corresponding hyperk\"{a}hler structure is $(X,I,J,K)$.
		\end{Def} 
		In \cite{verbitsky2004} the following is claimed, and both Theorem \ref{fiber} and Corollary \ref{moi} might be seen as some evidence.
		\begin{conj}
			\label{exoticconj}
			Exotic hypercomplex structures do not exist.
		\end{conj}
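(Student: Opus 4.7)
The plan is to argue by contradiction: assume $(X,I,J,K)$ is an exotic hypercomplex structure, so that $(X,I)$ is K\"ahler and holomorphically symplectic yet no Beauville--Yau hyperk\"ahler triple on $(X,I)$ reproduces the pair $(J,K)$. The first move is to harvest everything Theorem \ref{fiber} and its proof give for free: the Obata connection has holonomy in $\text{SL}(n,\mathbb{H})$, $(X,I,J,K)$ admits an HKT metric, the general fiber of $\pi: \mathcal{Z}\to \proj^1$ carries no curves or divisors, and by Theorem \ref{divisor} every divisor on $\mathcal{Z}$ is pulled back from $\proj^1$. These restrictions already rule out many naive mechanisms by which $\pi$ could differ globally from a hyperk\"ahler twistor fibration.

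Next I would set up a genuine hyperk\"ahler competitor. For any K\"ahler class $\alpha$ on $(X,I)$, Beauville--Yau produces a hyperk\"ahler triple $(I,J_\alpha,K_\alpha)$ with K\"ahler form $\alpha$ and a twistor map $\pi_\alpha:\mathcal{Z}_\alpha\to\proj^1$. The Kodaira--Spencer class of $\pi_\alpha$ at the distinguished fiber $(X,I)$ is obtained from $\alpha$ by contraction with the inverse holomorphic symplectic form, and the assignment $\alpha\mapsto \text{KS}_\alpha$ is injective on the K\"ahler cone. Hence there is a unique $\alpha$ for which $\pi$ and $\pi_\alpha$ share the same tangent vector at $\lambda$; fixing this $\alpha$, the task reduces to showing $\pi=\pi_\alpha$ globally, which contradicts the exoticness hypothesis.

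The third step is the rigidity argument itself. Via Poon's description in \cite{poondeformations}, both $\pi$ and $\pi_\alpha$ are real holomorphic sections of the same deformation functor for $(X,I)$ into hypercomplex twistor maps, agreeing to first order at $\lambda$. I would combine three inputs to try to extend this agreement: the $\text{SL}(n,\mathbb{H})$-Obata holonomy, which forces the deformation to lie in the hyperk\"ahler stratum; the absence of divisors on general fibers together with Theorem \ref{divisor}, which obstructs any ``branching'' between the two families over $\proj^1$; and the quasi-line property of the twistor sections with $N_{C,\mathcal{Z}}\cong \mathcal{O}(1)^{\oplus \dim X}$, which rigidifies the moduli of sections. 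Matched first-order data plus these constraints should, one hopes, imply $\pi=\pi_\alpha$.

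The main obstacle is exactly this rigidity step. The Verbitsky--Soldatenkov results cited in Theorem \ref{fiber} give powerful control over subvarieties but stop short of pinning down the higher-order behaviour of a real deformation of $(X,I)$ over $\proj^1$, and there is no a priori reason that two real sections with coincident tangent direction at one point must agree. Bridging this gap essentially amounts either to a classification of compact HKT manifolds with $\text{SL}(n,\mathbb{H})$-Obata holonomy modulo genuine hyperk\"ahler structures, or to the direct construction of a hyperk\"ahler metric from the HKT data on $(X,I,J,K)$; both are the substantive content of the conjecture and the reason it has remained open.
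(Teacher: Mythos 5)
There is a genuine gap---and in fact the statement you are trying to prove is not proved in the paper at all. It is stated as Conjecture~\ref{exoticconj}, an open problem attributed to \cite{verbitsky2004}; the paper offers Theorem~\ref{fiber} and Corollary~\ref{moi} only as \emph{evidence} for it, not as ingredients of a proof. So there is no proof in the paper to compare against, and any complete argument here would be a new result well beyond the paper's scope.

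Your proposal, to its credit, is honest about where it breaks: the rigidity step. But it is worth being precise about why that step is not a gap one can expect to close with the tools you list. Matching Kodaira--Spencer classes at $\lambda$ only fixes the first-order jet of the two families $\pi$ and $\pi_\alpha$ at one point of $\proj^1$; nothing in the $\text{SL}(n,\mathbb{H})$ holonomy of the Obata connection, in the absence of curves and divisors on general fibers (Theorem~\ref{fiber}, Theorem~\ref{divisor}), or in the quasi-line property $N_{C,\mathcal{Z}}\cong\mathcal{O}(1)^{\oplus\dim X}$ controls the higher-order jets of a real deformation in Poon's sense, and two real sections of the deformation functor agreeing to first order at a point can perfectly well diverge at second order. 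Indeed, deciding whether they must coincide is \emph{equivalent} to the conjecture itself, so the argument is circular: the ``rigidity step'' is not a technical lemma left to the reader but the entire content of the open problem. A secondary issue: your injectivity claim for $\alpha\mapsto\text{KS}_\alpha$ and the uniqueness of the Beauville--Yau triple $(I,J_\alpha,K_\alpha)$ both require $X$ simply connected (as the paper notes in Section 1.1), an assumption absent from the definition of an exotic structure, so even the construction of the competitor family $\pi_\alpha$ needs additional hypotheses.
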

	\end{section}
	
	\begin{section}{Metric properties}
		The non k\"{a}hlerianity of hyperk\"{a}hler twistor space was previously known. The two main arguments are based either on the triviality of the canonical of the fibers or on Fujiki's result \cite{fujiki} and neither of these clearly hold in the general hypercomplex setting. We derive the following theorem from Theorem \ref{fiber}.
		\begin{Thm}
			\label{Kahler}
			Let $(X, I,J,K)$ be a compact hypercomplex manifold and $\pi: \mathcal{Z}=Tw(X) \to \proj^1$ the twistor projection. Then $\mathcal{Z}$ admits no K\"{a}hler metrics.
		\end{Thm}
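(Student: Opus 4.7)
I argue by contradiction. Suppose $\mathcal{Z}$ admits a K\"{a}hler form $\omega$. Every fiber $X_\lambda = \pi^{-1}(\lambda)$, being a compact complex submanifold of $(\mathcal{Z}, \omega)$, is K\"{a}hler with induced K\"{a}hler form $\omega|_{X_\lambda}$. In particular the hypothesis of Theorem \ref{fiber} is satisfied, so the general fiber of $\pi$ contains no curves and no divisors; from the proof of that theorem each $(X, I_\lambda)$ is moreover holomorphically symplectic (with a holomorphic symplectic form $\Omega_\lambda$), and $(X, I, J, K)$ carries an HKT metric with Obata holonomy contained in $SL(n, \mathbb{H})$.

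\textbf{A topologically fixed K\"{a}hler class.} Next I would exploit the fact that the underlying differentiable manifold of $\mathcal{Z}$ is $X \times \proj^1$ and apply K\"{u}nneth to obtain
\[
H^2(\mathcal{Z}, \R) \;\simeq\; H^2(X, \R) \,\oplus\, \R\, c_1(\pi^*\mathcal{O}(1)).
\]
Under this splitting, the restrictions $[\omega|_{X_\lambda}] \in H^2(X_\lambda, \R)$ all correspond to a single fixed class $\alpha \in H^2(X, \R)$, independent of $\lambda$. Consequently $\alpha$ must be a K\"{a}hler class on $(X, I_\lambda)$ for \emph{every} $\lambda \in \proj^1$; in particular it must be of pure Hodge type $(1,1)$ with respect to every complex structure in the twistor family.

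\textbf{Ruling out such an $\alpha$.} As $\lambda$ varies, the Hodge line $\C \cdot [\Omega_\lambda] \subset H^2(X, \C)$ traces out a non-constant curve in $\proj H^2(X, \C)$, so the conditions $\alpha \in H^{1,1}_{I_\lambda}(X, \R)$ form a genuinely varying, overdetermined family of linear constraints. In the quaternionic Hodge framework available under the $SL(n, \mathbb{H})$-holonomy proved in \cite{verbitsky2007}, these constraints force $\alpha$ into the orthogonal complement of the span of the fundamental $2$-forms of a compatible HKT metric; on this subspace, the natural quadratic pairing built from the holomorphic volume form is negative semidefinite, which is incompatible with the strict K\"{a}hler positivity $\int_{X_\lambda} \alpha^{\dim_\C X} > 0$.

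\textbf{Main obstacle.} The last step is the delicate one. In the hyperk\"{a}hler case it is immediate from Beauville--Bogomolov, but in the purely hypercomplex setting one must build the substitute pairing out of the HKT metric and the $SL(n, \mathbb{H})$-holonomy, and then verify that negative semidefiniteness genuinely rules out K\"{a}hler classes. An alternative route hinted at in the introduction bypasses this altogether by using Y.~Gorginyan's non-Moishezon result in place of the final step: combined with $a(\mathcal{Z}) = 1$ from Corollary \ref{moi} and the severe restrictions on $\operatorname{Pic}(\mathcal{Z})$ from Theorem \ref{divisor}, one would try to show that a K\"{a}hler metric forces enough algebraic structure to contradict non-Moishezonness, with the added benefit of ruling out plurinegative metrics as well.
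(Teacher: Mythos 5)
There is a genuine gap, and you have identified it yourself: the step \enquote{Ruling out such an $\alpha$} is not an argument but a wish. In the hyperk\"{a}hler case that step works because the fundamental forms $\omega_I,\omega_J,\omega_K$ are $d$-closed, span a positive three-dimensional subspace for the Beauville--Bogomolov form, and the $(1,1)$-conditions for all $I_\lambda$ force $\alpha$ into a subspace where that form is negative semidefinite. In the merely hypercomplex/HKT setting none of this infrastructure exists: the HKT fundamental forms are not closed, so they do not define cohomology classes at all; no substitute for the Beauville--Bogomolov pairing is known (indeed, producing one is essentially as hard as Conjecture \ref{exoticconj}); and the holomorphic symplectic forms $\Omega_\lambda$ granted by Verbitsky's result are not canonically attached to the twistor family, so even your \enquote{non-constant curve of Hodge lines} is not well defined. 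Your fallback route also fails as sketched: a compact K\"{a}hler manifold of algebraic dimension $1$ is simply not Moishezon, so combining K\"{a}hlerness with Corollary \ref{moi} produces no tension with Gorginyan's theorem unless some further input forces projectivity.

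That further input is exactly the elementary observation your proposal misses, and it is how the paper closes the argument: $H^0(\mathcal{Z},\Omega^1)=0$. A holomorphic $1$-form $s$ restricted along any twistor section $j:\proj^1\to\mathcal{Z}$ gives a morphism $j^*T_{\mathcal{Z}}\simeq \mathcal{O}(2)\oplus\mathcal{O}(1)^{\oplus\dim X}\to\mathcal{O}_{\proj^1}$, which vanishes for degree reasons; since such sections cover $\mathcal{Z}$, we get $s=0$, and taking exterior powers yields $h^{2,0}(\mathcal{Z})=0$. Consequently, if $\mathcal{Z}$ were K\"{a}hler then $H^2(\mathcal{Z},\R)=H^{1,1}(\mathcal{Z},\R)$, the K\"{a}hler cone would be open in $H^2(\mathcal{Z},\R)$ and would contain a rational class, and Kodaira embedding would make $\mathcal{Z}$ projective. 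But then every fiber would be projective and hence full of divisors and curves, contradicting Theorem \ref{fiber}, which applies exactly as in your (correct) first paragraph because every fiber of a K\"{a}hler $\mathcal{Z}$ is K\"{a}hler. Note the diagnostic: your argument never actually uses the no-curves/no-divisors conclusion of Theorem \ref{fiber}, only the auxiliary HKT/holonomy data, whereas that conclusion is precisely what the contradiction must come from.
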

		\begin{proof}
			We have that $H^0(\mathcal{Z}, \Omega^1)=0$: assume that there exists $s\in H^0(\mathcal{Z}, \Omega^1)  $, $s\neq 0$, i.e. a non zero map $T_{  \mathcal{Z}}  \to \mathcal{O}_{\mathcal{Z}} $. Let $j:\proj^1 \to \mathcal{Z}$ be a section of $\pi$, then the restriction $j^*s$ induces a map $ j^* T_{  \mathcal{Z}} \to  j^* \mathcal{O}_{\mathcal{Z}} $ and the first bundle splits as described in \ref{formula1}, thus we have a composition
			\[ 
			\mathcal{O}_{\proj^1} (2) \oplus \mathcal{O}_{\proj^1} (1)^{\oplus \dim(X)} \to j^* \mathcal{O}_{\mathcal{Z}} \xhookrightarrow[]{} \mathcal{O}_{\proj^1}
			\]
			which implies that $j^* s=0$. Since $\mathcal{Z}$ is covered by such sections, $s$ must be zero. \\
			By taking exterior powers it follows that $H^0(\mathcal{Z}, \Omega^i)=0 $, for any $i>1$. In particular $h^{2,0} (\mathcal{Z})=0$, thus if it $\mathcal{Z}$ admits a K\"{a}hler metric,  $\mathcal{Z}$ is projective by Kodaira embedding theorem. This is a contradiction, for Theorem \ref{fiber}.
		\end{proof}
		\begin{rmk}
			The theorem is clearly meaningful only if some fiber of $\pi$ admits a K\"{a}hler metric. In particular if the conjecture \ref{exoticconj} is true, the statement follows.
		\end{rmk}
		Besides K\"{a}hler and balanced metrics, other natural definition of special metrics arise by introducing a notion of positivity on forms.
		\begin{Def}
			A $(k,k)$-form $\alpha$ is stronlgy positive if locally 
			\[
			\alpha = i^k \sum_j \eta_j  \, dz_{j_1} \wedge d\overline{z}_{j_1}  \wedge  \dots \wedge dz_{j_k} \wedge d\overline{z}_{j_k} 
			\]
			with $\eta_j$ positive real numbers; $\alpha$ is said positive if $\alpha \wedge \beta$ is positively oriented, for any strongly positive $(n-k, n-k)$ forms. A form $\alpha$ is (strongly) negative if $-\alpha$ is (strongly) positive. 
		\end{Def}
		\begin{Def}
			A hermitian metric with fundamental form $\omega$ is said 
			\begin{itemize} 
				\item plurinegative (resp. pluripositive) if $\de \deb \omega $ is negative (resp. positive)
				\item pluriclosed if $\de \deb \omega=0$
				\item hermitian symplectic if $\omega$ is the $(1,1)$-part of a closed form
			\end{itemize}
		\end{Def}
		Clearly hermitian symplectic metrics are pluriclosed and pluriclosed metrics are both pluripositive and plurinegative. K\"{a}hler metrics are hermitian symplectic. \\
		We now briefly outline the proof contained in \cite{verbitsky2014}. The same arguments together with Theorem \ref{Kahler} gives the result for hypercomplex twistor spaces. \\
		We recall that for a complex manifold $X$, the Douady space $D_n(X)$ parametrizes pure $n$-dimensional complex subspaces of $X$. Its construction is quite complicated and we refer to \cite{campanapeternell} and \cite{douady} for the details. We only point out that $D_n(X)$ has a complex structure and that there exists a universal family $p:\mathcal{X}\to D_n(X)$, i.e. it is a fine moduli space. \\
		Moreover, a hermitian metric on $X$ induces a natural volume function
		\[
		\text{vol}= p_* \pi^* \omega : D_1(X) \to \R_{\geq 0}
		\]
		where $\pi : \mathcal{X}\to X$ is the natural forgetful map and the pushforward is via integration along fibers. 
		Thus $-\text{vol}$ is plurisubharmonic if $\omega$ is plurinegative\footnote{This holds since (strong) negativity is preserved along pullbacks and for a smooth fiber bundle $E\to B$ with compact fiber one has $ \int_E \alpha \wedge p^* \beta     = \int_B p_* \alpha \wedge \beta  $.}. From this, together with the properness of $\text{vol}$, the following theorem follows.
		\begin{Thm}[Verbitsky, \cite{verbitsky2014}] \label{verb1}
			Let $X$ be a complex manifold admitting a plurinegative metric. Then every connected component of the moduli space of embedded curves is compact.
		\end{Thm}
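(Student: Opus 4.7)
The plan is to combine the plurisubharmonicity of $-\mathrm{vol}$ with the properness of $\mathrm{vol}$ via the maximum principle for plurisubharmonic functions on a connected complex analytic space. Let $M$ be a connected component of the moduli space of embedded curves, viewed as a closed analytic subspace of $D_1(X)$, and consider the restricted continuous proper function $\mathrm{vol}\colon M \to \mathbb{R}_{\geq 0}$, whose negative is plurisubharmonic by the observation preceding the theorem.

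The first key step is to show that $\mathrm{vol}$ attains its infimum on $M$. Setting $a := \inf_{M} \mathrm{vol} \geq 0$, properness implies that for every $\varepsilon > 0$ the sublevel set $\{p \in M : \mathrm{vol}(p) \leq a + \varepsilon\}$ is compact and non-empty, so continuity furnishes a point $p_0 \in M$ at which $\mathrm{vol}(p_0) = a$. At such a $p_0$, the plurisubharmonic function $-\mathrm{vol}$ attains its global supremum on the connected complex space $M$, and the classical maximum principle, applied irreducible-component-by-irreducible-component through the locus where the supremum is realised, forces $-\mathrm{vol}$ to be constant on $M$. Hence $\mathrm{vol} \equiv a$ on $M$, so $M \subseteq \mathrm{vol}^{-1}(a)$, a compact subset of $D_1(X)$ by properness; being closed inside this compact set, $M$ is itself compact.

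The hardest part I anticipate is the verification of the maximum principle on a possibly singular $M$: one must check that at $p_0$ the function $-\mathrm{vol}$ is locally constant on each irreducible local component, so that the level set where its supremum is attained is simultaneously open and closed in $M$. This is classical (see e.g.\ Demailly's textbook on complex analytic and differential geometry), but it is the only step in which the complex-analytic rather than merely topological structure of the Douady space is genuinely used; the remainder of the argument is a purely formal consequence of properness once $-\mathrm{vol}$ is known to be plurisubharmonic, in the spirit of the hyperkähler arguments of Fujiki and Verbitsky.
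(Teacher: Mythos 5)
Your proof is correct and takes essentially the same route as the paper, which states this theorem as a quoted result and derives it in one line from exactly the two ingredients you use: plurisubharmonicity of $-\mathrm{vol}$ (from plurinegativity of $\omega$) and properness of $\mathrm{vol}$. The details you supply --- attainment of the infimum via properness, the maximum principle propagated through irreducible components of the (possibly singular) component of the Douady space, and compactness of the resulting level set --- are precisely the argument of \cite{verbitsky2014} that the paper is summarizing.
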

		The second theorem instead is based on the deformation properties of quasi-lines. All the details can be found in Verbitsky's original paper \cite{verbitsky2014}.
		\begin{Thm}[Verbitsky, \cite{verbitsky2014}] \label{verb2}
			Let $X$ be an $n$ dimensional complex manifold with a smooth rational curve $C \subset X$ with $N_{C,X }\backsimeq \mathcal{O}(1)^{\oplus n-1} $ and let $W$ the connected component of the moduli space of embedded curves containing $C$. If $W$ is compact, then $X$ is Moishezon.
		\end{Thm}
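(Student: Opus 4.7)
My plan is to leverage the deformation theory of the quasi-line $C$ to exhibit a generically finite surjective morphism from a space that is visibly projective over $W$ onto $X\times X$. Since $N_{C,X}\cong\mathcal{O}(1)^{\oplus n-1}$ gives $h^1(N_{C,X})=0$ and $h^0(N_{C,X})=2(n-1)$, the space $W$ is smooth at $[C]$ of dimension $2(n-1)$; moreover $h^0(N_{C,X}(-p-q))=0$ for distinct $p,q\in C$ shows that infinitesimally fixing two general points rigidifies the curve, so only finitely many elements of $W$ pass through two sufficiently general points of $X$.

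Next I would form the universal family $p:\mathcal{X}\to W$ with evaluation $\mathrm{ev}:\mathcal{X}\to X$ and the double evaluation $\mathrm{ev}_2:\mathcal{X}\times_W\mathcal{X}\to X\times X$. Source and target have equal dimension $\dim W+2=2n$; the infinitesimal rigidity above exhibits a point of the source with zero-dimensional $\mathrm{ev}_2$-fiber, so by upper semicontinuity of fiber dimension and irreducibility of $X\times X$ the closed image must be all of $X\times X$, and $\mathrm{ev}_2$ is a generically finite surjective proper morphism. In particular $a(X\times X)=a(\mathcal{X}\times_W\mathcal{X})$, and $X\times X$ is Moishezon iff $\mathcal{X}\times_W\mathcal{X}$ is.

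The final step, and the main hurdle, is turning this into Moishezon-ness of $X$ itself. Since $\mathcal{X}\times_W\mathcal{X}$ is a $\proj^1\times\proj^1$-bundle, hence projective, over $W$, it is Moishezon as soon as $W$ is; Moishezon-ness then propagates through the generically finite surjection $\mathrm{ev}_2$ to $X\times X$, and finally to $X$ as an image under projection. What remains is to show $W$ is Moishezon; here I would invoke (or reprove) a Campana-type theorem asserting that a compact component of the Barlet space parametrising a covering family of rational curves on a compact complex manifold, for which two general points of the ambient manifold are joined by only finitely many members of the family, is itself Moishezon. This algebraicity result is the genuinely non-trivial ingredient, whereas the dimension counts and the construction of $\mathrm{ev}_2$ are direct consequences of the normal bundle hypothesis.
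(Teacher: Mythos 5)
The paper itself does not reprove this theorem: it defers entirely to Verbitsky's original argument in \cite{verbitsky2014}, so your attempt has to be measured against that proof. Your deformation-theoretic groundwork agrees with it: $h^1(N_{C,X})=0$ and $h^0(N_{C,X})=2(n-1)$ give smoothness of $W$ at $[C]$ of the expected dimension, $h^0\big(N_{C,X}(-p-q)\big)=0$ gives infinitesimal rigidity once two points are fixed, and (using compactness of $W$, hence properness of the evaluation maps) the curves of the family cover $X$ and the double evaluation $\mathrm{ev}_2$ is a surjective, generically finite proper map onto $X\times X$. All of this is correct and is exactly the geometric input Verbitsky extracts from the hypothesis $N_{C,X}\cong\mathcal{O}(1)^{\oplus n-1}$.

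The endgame, however, has a genuine gap, and it sits precisely where you place ``the genuinely non-trivial ingredient''. The Campana-type statement you propose to invoke --- that a compact component of the Barlet/Douady space parametrizing a covering family of rational curves in an \emph{arbitrary} compact complex manifold, with only finitely many members through two general points, is itself Moishezon --- is not available in the literature in that form. The known algebraicity results for cycle and Douady spaces (Fujiki, Campana) assume the ambient space is K\"{a}hler or of Fujiki class $\mathcal{C}$; for a general compact $X$, the Moishezon property of $W$ is deduced \emph{from} that of $X$, not the other way around. So your plan is circular as it stands: the unproved step has the same strength as (indeed is a consequence of) the theorem being proved. What Campana actually proves, and what Verbitsky uses, is a criterion whose conclusion concerns $X$ directly: if a compact complex manifold carries a compact analytic family of $1$-cycles which covers it and connects two general points by chains of members of the family, then the manifold is Moishezon (this is the cor\'{e}duction alg\'{e}brique theory; cf.\ also \cite{campana1991twistor}). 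Your dimension counts already verify both hypotheses of this criterion, so the correct route is to apply it to $X$ itself, which makes the detour through the algebraicity of $W$ and the fiber product $\mathcal{X}\times_W\mathcal{X}$ unnecessary. A secondary, more fixable, inaccuracy: $\mathcal{X}\times_W\mathcal{X}$ is not a $\proj^1\times\proj^1$-bundle over $W$, since a compact Douady component will in general contain points parametrizing singular, reducible or non-reduced curves; so even granting $W$ Moishezon, the claim that the fiber product is ``projective over $W$'' (and hence Moishezon) would itself need an argument.
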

		The result by Y. Gorginyan contained in \cite{yulia} stating that compact hypercomplex twistor spaces are not Moishezon immediately implies the following. 
		\begin{Corollary}
			\label{plurinegative}
			Let $(X, I,J,K)$ be a compact hypercomplex manifold and $\pi: Tw(X) \to \proj^1$ its twistor space. Then $Tw(X)$ admits no plurinegative metrics.
		\end{Corollary}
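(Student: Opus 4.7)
My plan is to argue by contradiction, chaining the two Verbitsky theorems recalled above with Gorginyan's non--Moishezon result. Suppose $Tw(X)$ admits a plurinegative hermitian metric. Theorem \ref{verb1} then immediately yields that every connected component of the moduli space of embedded curves in $Tw(X)$ is compact.

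To feed this into Theorem \ref{verb2} I need a smooth rational curve $C \subset Tw(X)$ whose normal bundle is $\mathcal{O}(1)^{\oplus n-1}$, where $n = \dim_{\mathbb{C}} Tw(X) = \dim_{\mathbb{C}} X + 1$. The twistor construction supplies an abundance of such curves: the images of the fibers of $p : Tw(X) \to X$ are embedded smooth rational sections of $\pi$ whose normal bundle is $\mathcal{O}(1)^{\oplus \dim_{\mathbb{C}} X}$, as recorded in Section 1.2, and the dimensional count $\dim_{\mathbb{C}} X = n - 1$ matches exactly. Fixing any such $C$ and letting $W$ denote the connected component of the moduli space of embedded curves containing it, $W$ is compact by the previous paragraph, and Theorem \ref{verb2} concludes that $Tw(X)$ is Moishezon.

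This directly contradicts Gorginyan's theorem, which asserts that a compact hypercomplex twistor space is never Moishezon, finishing the proof. I do not anticipate any genuine obstacle: the argument is essentially a three-line concatenation of results already stated in the excerpt. The only small verifications are the dimensional bookkeeping for the normal bundle and the fact that the twistor sections are genuinely embedded (so that they define points of the moduli space of \emph{embedded} curves and sit in a well-defined connected component thereof), both of which are immediate from the material recalled in Section 1.2.
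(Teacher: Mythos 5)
Your proposal is correct and is precisely the argument the paper intends: Theorem \ref{verb1} gives compactness of the components of the moduli space of embedded curves, Theorem \ref{verb2} applied to a twistor section (with its normal bundle $\mathcal{O}(1)^{\oplus \dim X}$ matching the required $\mathcal{O}(1)^{\oplus n-1}$) yields that $Tw(X)$ is Moishezon, contradicting Gorginyan's theorem. The paper compresses this chain into the sentence that Gorginyan's result ``immediately implies'' the corollary, so your write-up simply makes explicit the same three-step concatenation.
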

		The non existence of pluriclosed metrics can be proven, though, also without the above mentioned result, thanks to the first part of this work and to these two additional lemmas.
		\begin{Lemma}[Peternell, \cite{peternellalgebraicity}]\label{peternell}
			If a compact Moishezon manifold $X$ admits a hermitian symplectic metric, then $X$ is K\"{a}hler.
		\end{Lemma}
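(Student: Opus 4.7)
The plan is to exploit the two hypotheses separately: the hermitian symplectic condition to extract a $d$-closed $(1,1)$-form from $\Omega$, and the Moishezon condition to supply the analytic tools (the $\de\deb$-lemma and the Demailly--Paun cone theorem) needed to promote it to a K\"ahler form.

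First I would record the bidegree consequences of $d\Omega = 0$, writing $\Omega = \alpha + \omega + \bar{\alpha}$ with $\alpha$ of type $(2,0)$ and $\omega$ the positive $(1,1)$-form. The $(3,0)$ and $(2,1)$ components give $\de\alpha = 0$ and $\deb\alpha = -\de\omega$, and applying $\deb$ to the latter yields $\de\deb\omega = 0$; so hermitian symplectic implies pluriclosed for free.

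Next I would use that compact Moishezon manifolds lie in Fujiki's class $\mathcal{C}$ and hence satisfy the $\de\deb$-lemma. The form $\deb\alpha = -\de\omega$ is pure of type $(2,1)$, $\de$-exact, and $\deb$-closed, so the lemma produces a $(1,0)$-form $\eta$ with $\deb\alpha = \de\deb\eta$. Setting
\[
\omega_0 := \omega - \deb\eta - \de\bar{\eta},
\]
a direct computation gives $\de\omega_0 = \de\omega - \de\deb\eta = -\deb\alpha - \de\deb\eta = 0$, and reality of $\omega_0$ then forces $d\omega_0 = 0$. Equivalently, $\omega_0$ is the $(1,1)$-part of the closed real 2-form $\Omega - d(\eta + \bar{\eta})$.

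The main obstacle is to show that $[\omega_0] \in H^{1,1}(X,\R)$ actually lies in the K\"ahler cone, since the Aeppli-exact correction $\deb\eta + \de\bar{\eta}$ need not preserve pointwise positivity. I expect to address this via the Demailly--Paun characterization of the K\"ahler cone on manifolds in class $\mathcal{C}$: the class $[\omega_0]$ is K\"ahler as soon as $\int_V \omega_0^{\dim V} > 0$ for every irreducible analytic subvariety $V \subset X$. Pointwise positivity of $\omega$ provides $\int_V \omega^{\dim V} > 0$, and the remaining step is to transfer this positivity from $\omega$ to $\omega_0$: for curves it is an immediate Stokes computation on the smooth part, while for higher-dimensional $V$ one must control the error terms produced by $\deb\omega \ne 0$ by passing to a desingularization of $V$ and exploiting the pluriclosed relation $\de\deb\omega = 0$. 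This positivity transfer along subvarieties is the delicate point, and is the step where I would most closely follow the original argument of Peternell in \cite{peternellalgebraicity}.
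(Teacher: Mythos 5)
First, a remark on the comparison itself: the paper does not prove this statement at all --- it is quoted as a known result of Peternell from \cite{peternellalgebraicity} --- so your proposal can only be measured against Peternell's own argument and its internal correctness. The first half of your proposal is fine. Decomposing $d\Omega=0$ by bidegree does give $\de\alpha=0$, $\deb\alpha=-\de\omega$ and hence $\de\deb\omega=0$; Moishezon manifolds lie in Fujiki's class $\mathcal{C}$ and satisfy the $\de\deb$-lemma; and applying that lemma to $\deb\alpha$ (which is $\de$-closed, $\deb$-closed and $\de$-exact) produces $\eta$ with $\deb\alpha=\de\deb\eta$. There is a sign slip --- with your normalization you must set $\omega_0=\omega+\deb\eta+\de\bar{\eta}$, since your $\omega_0$ satisfies $\de\omega_0=-2\de\deb\eta$ --- but that is cosmetic. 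At this point you have shown that the Aeppli class of $\omega$ contains a $d$-closed real $(1,1)$-representative $\omega_0$, and nothing more.

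The last step is a genuine gap, and it is not a technical point you can defer: it is the entire content of the lemma. Two things go wrong. First, the statement you invoke --- that on a manifold of class $\mathcal{C}$ a class $[\omega_0]$ with $\int_V\omega_0^{\dim V}>0$ for all irreducible $V$ is K\"{a}hler --- is not the Demailly--Paun theorem. Their numerical characterization is proved for compact \emph{K\"{a}hler} manifolds, so invoking it here is circular; and even in the K\"{a}hler case it only identifies the K\"{a}hler cone as \emph{one connected component} of the positive cone, so positivity of intersection numbers alone never certifies that a given class is K\"{a}hler unless one can connect it to a K\"{a}hler class, which is exactly what is unavailable. Second, the ``positivity transfer'' fails for $\dim V\geq 2$: since $\omega_0-\omega=\deb\eta+\de\bar{\eta}$ is only Aeppli-exact, integrals of $p$-th powers with $p\geq 2$ are not Aeppli invariants; integrating by parts on a resolution of $V$ leaves terms of the shape $\int_V \eta\wedge\deb\omega\wedge\omega_0^{i}\wedge\omega^{p-2-i}$, which have no sign, and $\de\deb\omega=0$ does not make them vanish. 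Only the curve case $p=1$ survives, as you note. Peternell's actual proof does not repair this step but replaces the mechanism entirely: it is a cone/Hahn--Banach duality argument with positive currents in the spirit of Sullivan and Harvey--Lawson. If a Moishezon manifold is not projective, one produces a nonzero \emph{positive $d$-exact current of bidimension $(1,1)$} (coming from effective $1$-cycles degenerating in homology); such a current pairs to zero against the closed form $\Omega$ but strictly positively against its positive $(1,1)$-part $\omega$, a contradiction, and projectivity then gives K\"{a}hlerness trivially. Note that in the correct argument positivity is only ever tested on curves --- precisely the case where your Stokes computation works --- whereas the higher-dimensional subvariety positivity your route requires is neither needed nor true in general.
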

		\begin{Thm}[Alessandrini, Bassanelli \cite{alessandrini1992positive}]
			\label{A.B.}
			Let $X$ be a compact complex manifold of dimension $n$ for which the $\de \deb$-lemma holds. Then $X$ admits a hermitian symplectic metric if and only if $X$ admits a pluriclosed metric.
		\end{Thm}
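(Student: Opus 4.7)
The plan is to prove both directions of the biconditional, noting that the forward implication (hermitian symplectic $\Rightarrow$ pluriclosed) requires no hypothesis on $X$. If $\Omega = \sigma + \omega + \bar{\sigma}$ is closed with $\sigma \in \Omega^{2,0}(X)$, then decomposing $d\Omega = 0$ into pure types and reading off the $(2,1)$-component gives $\de \omega = -\deb \sigma$; applying $\deb$ yields $\de \deb \omega = -\deb \de \omega = \deb^{2} \sigma = 0$, so $\omega$ is pluriclosed. This direction is essentially formal and does not require the $\de \deb$-lemma.

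The substantive direction is the converse, and here the $\de \deb$-lemma does all the work. Given a pluriclosed hermitian form $\omega$, I would try to construct the missing $(2,0)$-piece $\sigma$ so that $\Omega := \sigma + \omega + \bar{\sigma}$ is $d$-closed. A type-by-type inspection of $d\Omega = 0$ reduces the problem to the two equations
\[
\de \sigma = 0, \qquad \deb \sigma = -\de \omega,
\]
the $(1,2)$- and $(0,3)$-components being their conjugates. Now $-\de \omega \in \Omega^{2,1}(X)$ is tautologically $\de$-exact, and it is $\deb$-closed since $\deb \de \omega = -\de \deb \omega = 0$ by pluriclosedness; in particular it is $d$-closed and $\de$-exact. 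Invoking the $\de \deb$-lemma in bidegree $(2,1)$ produces $\gamma \in \Omega^{1,0}(X)$ with $-\de \omega = \de \deb \gamma$. Setting $\sigma := -\de \gamma$, one then has $\de \sigma = -\de^{2} \gamma = 0$ and $\deb \sigma = -\deb \de \gamma = \de \deb \gamma = -\de \omega$, which is exactly what is needed. The $(1,1)$-part of the resulting $\Omega$ is unchanged from $\omega$, so positivity is automatic and the form is genuinely hermitian symplectic.

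The main obstacle is essentially bookkeeping: one must apply the $\de \deb$-lemma at the correct bidegree and check that the potential $\gamma$ for a $(2,1)$-form indeed lives in $\Omega^{1,0}$, which is precisely the type needed so that $\sigma = -\de \gamma$ has type $(2,0)$. A secondary point is the reality of the constructed $\Omega$, which is handled by explicitly adding the conjugate $\bar{\sigma}$; conjugating the equation $\deb \sigma = -\de \omega$ automatically yields the $(1,2)$-equation $\de \bar{\sigma} = -\deb \omega$ since $\omega$ is real, so no further averaging is required.
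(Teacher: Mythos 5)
Your proof is correct, but there is nothing in the paper to compare it against: the paper quotes this statement as an external result of Alessandrini and Bassanelli \cite{alessandrini1992positive} and never reproduces an argument, so your write-up is a genuine (and welcome) addition rather than a variant of an internal proof. All of your steps check out. The forward implication is indeed formal: the $(2,1)$-component of $d\Omega = 0$ gives $\de\omega = -\deb\sigma$, hence $\de\deb\omega = 0$; this is exactly the paper's passing remark that hermitian symplectic metrics are pluriclosed. For the converse, you correctly reduce closedness of $\Omega = \sigma + \omega + \bar{\sigma}$ to the two equations $\de\sigma = 0$ and $\deb\sigma = -\de\omega$ (the $(1,2)$- and $(0,3)$-components being conjugates because $\omega$ is real), you verify that $-\de\omega$ is $\de$-exact by construction and $d$-closed because pluriclosedness gives $\deb\de\omega = -\de\deb\omega = 0$, and you apply the $\de\deb$-lemma in the correct bidegree to obtain $\gamma$ of type $(1,0)$ with $\de\deb\gamma = -\de\omega$; the form $\sigma = -\de\gamma$ then satisfies both equations, with all signs consistent with $\de\deb = -\deb\de$, and the $(1,1)$-part of the resulting closed real form is the original positive $\omega$. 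Two features of your argument are worth flagging: it proves the stronger statement that a pluriclosed metric is \emph{itself} hermitian symplectic (not merely that some other hermitian symplectic metric exists), and it uses compactness only through the hypothesis that the $\de\deb$-lemma holds, so it is exactly the form of the statement needed in the paper's proof of Corollary \ref{plurinegative}.
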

		\begin{proof}[Proof of Corollary \ref{plurinegative} for pluriclosed metrics]
			If $Tw(X)$ admits a pluriclosed metric, then $Tw(X)$ is Moishezon by Theorem \ref{verb1} and Theorem \ref{verb2}. Then the $\de \deb$-lemma holds and  $Tw(X)$ admits a  hermitian symplectic metric, by Theorem \ref{A.B.}. The claim follows from Theorem \ref{Kahler} and Lemma \ref{peternell}.
		\end{proof}
		To conclude, Corollary \ref{plurinegative} allows to give an alternative and immediate proof of the result contained in \cite{campana1991twistor} on the twistor spaces of hypercomplex surfaces.
		\begin{Corollary}
			The twistor space of a compact hypercomplex surface is not in Fujiki's class $\mathcal{C}$.
		\end{Corollary}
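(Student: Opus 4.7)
The plan is to argue by contradiction: suppose $\mathcal{Z} = Tw(X)$ lies in Fujiki's class $\mathcal{C}$, and deduce the existence of a plurinegative metric on $\mathcal{Z}$, in contradiction with Corollary \ref{plurinegative}.

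Since $X$ is a compact hypercomplex surface, $\mathcal{Z}$ is a compact complex threefold. First I would invoke the theorem of Alessandrini--Bassanelli: any compact complex manifold in class $\mathcal{C}$ carries a balanced metric. Applied to $\mathcal{Z}$, this yields a Hermitian form $\omega$ with $d\omega^{2}=0$, giving in dimension three the pointwise identity
\[
\omega \wedge \de\deb\omega \;=\; -\,\de\omega \wedge \deb\omega,
\]
so that after multiplying by $i$ the form $\omega \wedge i\de\deb\omega$ has a definite sign as a top form.

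Second, I would strengthen this into a genuinely plurinegative metric. The idea is to combine $\omega$ with a K\"ahler current on $\mathcal{Z}$, which exists by Demailly's characterization of class $\mathcal{C}$: Demailly regularization produces closed $(1,1)$-forms $\alpha_{k}$ converging to the current, and the perturbation $\omega_{\varepsilon}=\omega+\varepsilon\alpha_{k}$ retains positivity for small $\varepsilon$ while its $\de\deb$ is controlled by that of $\omega$. The sign condition coming from the balanced identity, together with the closedness of $\alpha_{k}$, should then force $\de\deb\omega_{\varepsilon}$ to be a negative $(2,2)$-form. Corollary \ref{plurinegative} forbids any such metric on a hypercomplex twistor space, delivering the contradiction.

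The main obstacle is unquestionably the second step: the balanced identity only gives a sign condition for $\de\deb\omega$ when paired with $\omega$ itself, whereas plurinegativity requires the correct sign against every strongly positive $(1,1)$-form, and Demailly regularization introduces a negative contribution which must be kept under control. A cleaner alternative, bypassing the explicit construction of a plurinegative metric, is to use bimeromorphic invariance of compactness of cycle spaces (Lieberman on the K\"ahler resolution) to show that the Douady component containing a twistor quasi-line of $\mathcal{Z}$ is compact, and then apply Theorem \ref{verb2} to conclude that $\mathcal{Z}$ is Moishezon, contradicting Gorginyan's theorem exactly as in the proof of Corollary \ref{plurinegative}.
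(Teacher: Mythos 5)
Your primary construction has a genuine gap and, in fact, provably cannot work. First, the perturbation step is vacuous: if the Demailly approximants $\alpha_k$ are closed, then
\[
\de\deb\omega_{\varepsilon} \;=\; \de\deb(\omega+\varepsilon\alpha_k) \;=\; \de\deb\omega ,
\]
so no choice of $\varepsilon$ or $k$ changes the form whose negativity you need to establish. Second, the balanced identity gives the \emph{wrong} sign. On a $3$-fold, $d\omega^2=0$ forces $\omega\wedge\de\omega=0$, i.e.\ $\de\omega$ is a primitive $(2,1)$-form, and the pointwise Hodge--Riemann relations for primitive $(2,1)$-forms say that $-i\,\de\omega\wedge\overline{\de\omega}$ is a non-negative volume form; hence
\[
i\,\de\deb\omega\wedge\omega \;=\; -\,i\,\de\omega\wedge\deb\omega \;\geq\; 0 ,
\]
which (up to the standard normalization of the $i$ factors) is the \emph{pluripositive} direction --- consistently with \cite{kaledin1998non}, where the natural balanced metric on a hyperk\"ahler twistor space is pluripositive. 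If $\de\deb\omega$ were negative, pairing against the strongly positive form $\omega$ would force equality above, hence $\de\omega=0$ by strictness of the Hodge--Riemann relations, i.e.\ $\omega$ would be K\"ahler, contradicting Theorem \ref{Kahler}. So no balanced metric on $\mathcal{Z}$, nor any perturbation of one by closed forms, is ever plurinegative. The paper does not attempt such a construction: its proof is a one-line citation of the (nontrivial) known theorem that every compact $3$-fold in class $\mathcal{C}$ admits a plurinegative metric (see \cite{egidi2001special}), after which Corollary \ref{plurinegative} gives the contradiction; the metric produced by that theorem is not a perturbation of a balanced one.

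Your fallback route, by contrast, is sound and genuinely different from the paper's argument. Compactness of the components of the Douady/cycle space holds for any compact manifold in class $\mathcal{C}$ by theorems of Fujiki and Campana (this, rather than ``Lieberman applied to a K\"ahler resolution,'' is the clean citation, though bimeromorphic transfer from the resolution is indeed the mechanism of their proofs). Combined with Theorem \ref{verb2} applied to a twistor quasi-line and with Gorginyan's non-Moishezon theorem \cite{yulia}, this yields the contradiction without ever invoking plurinegative metrics or the dimension-three restriction, and so proves the stronger statement that the twistor space of a compact hypercomplex manifold of \emph{any} dimension is not in class $\mathcal{C}$. One point you would still need to nail down: Theorem \ref{verb2} requires compactness of the connected component $W$ of the moduli space of \emph{embedded} curves, which a priori is only a (possibly non-closed) subspace of the compact Douady component; you must either show $W$ is closed there, or verify that Verbitsky's proof of Theorem \ref{verb2} goes through with the closure of $W$ in the Douady space.
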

		\begin{proof}
			In dimension $3$ any manifold in Fujiki's class $\mathcal{C}$ admits a plurinegative metric (see for example \cite{egidi2001special}).
		\end{proof}
	\end{section}
	
	\printbibliography[heading=bibintoc,
	title={Bibliography}
	]

@article{verbitsky2004,
	title={Hypercomplex structures on K\"{a}hler manifolds}, 
	author={Verbitsky, Misha},
	year={2004},
	archivePrefix={arXiv},
	url={https://arxiv.org/abs/math/0406390}, 
}

@article{penrose1967twistor,
	title={Twistor algebra},
	author={Penrose, Roger},
	journal={J. of Math. Phys.},
	volume={8},
	number={2},
	pages={345--366},
	year={1967},
	publisher={American Institute of Physics}
}

@article{verbitsky2007,
	title={Hypercomplex manifolds with trivial canonical bundle and their holonomy}, 
	author={Verbitsky, Misha},
	year={2007},
	archivePrefix={arXiv},
	url={https://arxiv.org/abs/math/0406537}, 
}

@article{verbitsky2014,
	title={Rational curves and special metrics on twistor spaces},
	author={Verbitsky, Misha},
	journal={Geom. Topol.},
	volume={18},
	number={2},
	pages={897--909},
	year={2014},
	publisher={Mathematical Sciences Publishers}
}

@article{hitchin1987,
	title={Hyperk{\"a}hler metrics and supersymmetry},
	author={Hitchin, Nigel J. and Karlhede, Anders and Lindstr{\"o}m, Ulf. and Ro{\v{c}}ek, Martin},
	journal={Comm. of Math. Phys.},
	volume={108},
	number={4},
	pages={535--589},
	year={1987},
	publisher={Springer}
}

@article{verbitsky2004subvarieties,
	title={Subvarieties in non-compact hyperk{\"a}hler manifolds},
	author={Verbitsky, Misha},
	journal={Math. Res. Letters},
	volume={11},
	number={4},
	pages={413--418},
	year={2004},
	publisher={Mathematical Publishing}
}

@article{hitchinkahlerian,
	title={K{\"a}hlerian twistor spaces},
	author={Hitchin, Nigel J.},
	journal={Proc. of London Math. Soc.},
	volume={3},
	number={1},
	pages={133--150},
	year={1981},
	publisher={Oxford University Press}
}

@article{egidi2001special,
	title={Special metrics on compact complex manifolds},
	author={Egidi, Nadaniela},
	journal={Diff. Geom. and its Applications},
	volume={14},
	number={3},
	pages={217--234},
	year={2001},
	publisher={Elsevier}
}

@article{Soldatenkov_2012,
	title={Subvarieties of hypercomplex manifolds with holonomy in SL(n,H)},
	volume={62},
	number={11},
	journal={J. of Geom. and Phys.},
	publisher={Elsevier BV},
	author={Soldatenkov, Andrey and Verbitsky, Misha},
	year={2012},
	pages={2234–2240} }

@article{beauville,
	title={Vari{\'e}t{\'e}s K{\"a}hleriennes dont la premiere classe de Chern est nulle},
	author={Beauville, Arnaud},
	journal={J. of Differential Geometry},
	volume={18},
	number={4},
	pages={755--782},
	year={1983},
	publisher={Lehigh University}
}

@incollection{fujiki,
	title={On the de Rham cohomology group of a compact K{\"a}hler symplectic manifold},
	author={Fujiki, Akira},
	booktitle={Alg. Geom., Sendai},
	volume={10},
	pages={105--166},
	year={1987},
	publisher={Mathematical Society of Japan}
}

@inproceedings{douady,
	title={Le probl{\`e}me des modules pour les sous-espaces analytiques compacts d'un espace analytique donn{\'e}},
	author={Douady, Adrien},
	booktitle={Ann. Inst. Fourier},
	volume={16},
	pages={1--95},
	year={1966}
}

@article{kaledin1998non,
	title={Non-Hermitian Yang-Mills connections},
	author={Kaledin, Dmitry and Verbitsky, Misha},
	journal={Selecta Mathematica},
	volume={4},
	pages={279--320},
	year={1998},
	publisher={Springer}
}

@incollection{campanapeternell,
	title={Cycle spaces},
	author={Campana, Fr{\'e}d{\'e}ric and Peternell, Thomas},
	booktitle={Several Complex Variables VII: Sheaf-Theoretical Methods in Complex Analysis},
	pages={319--349},
	year={1994},
	publisher={Springer}
}

@article{peternellalgebraicity,
	title={Algebraicity criteria for compact complex manifolds},
	author={Peternell, Thomas},
	journal={Math. Ann.},
	volume={275},
	pages={653--672},
	year={1986},
	publisher={Springer}
}

@article{alessandrini1992positive,
	title={Positive $\de \deb$-closed currents and non-K{\"a}hler geometry},
	author={Alessandrini, Lucia and Bassanelli, Giovanni},
	journal={J. of Geom. Anal.},
	volume={2},
	number={4},
	pages={291--316},
	year={1992},
	publisher={Springer}
}

@inproceedings{obata,
	title={Affine connections on manifolds with almost complex, quaternion or Hermitian structure},
	author={Obata, Morio},
	booktitle={Japan J. of Math.: transactions and abstracts},
	volume={26},
	pages={43--77},
	year={1956},
	organization={The Mathematical Society of Japan}
}

@article{verbitsky2012holographyprincipletwistorspaces,
	title={Holography principle for twistor spaces}, 
	author={Verbitsky, Misha},
	year={2012},
	archivePrefix={arXiv},
	url={https://arxiv.org/abs/1211.5765}, 
}

@article{pontecorvo1991hermitian,
	title={Hermitian surfaces and a twistor space of algebraic dimension 2},
	author={Pontecorvo, Massimiliano},
	journal={Proc. of the American Math. Society},
	volume={113},
	number={1},
	pages={177--186},
	year={1991}
}

@article{huybrechtsglobal,
  title={A global Torelli Theorem for hyperk\"{a}hler manfidolds [after M. Verbitsky]},
  author={Huybrechts, Daniel},
  journal={S{\'e}m. Bourbaki},
  year={2011},
  publisher={Citeseer}
}

@article{grantcharov,
	title={A survey on pluriclosed and CYT metrics},
	author={Fino, Anna and Grantcharov, Gueo},
	journal={Serdica Math. J.},
	volume={50},
	number={2},
	year={2024}
}

@article{yulia,
	title={Twistor space of a compact hypercomplex
	manifold is never Moishezon (In progress)}, 
	author={Gorginyan, Yulia},
	year={2024},
	archivePrefix={arXiv}
}

@article{kaledin1996integrability,
	title={Integrability of the twistor space for a hypercomplex manifold},
	author={Kaledin, Dmitry},
	journal={Selecta Mathematica},
	volume={4},
	pages={271--278},
	year={1998},
	publisher={Springer}
}

@article{tomberg2015twistor,
	title={Twistor spaces of hypercomplex manifolds are balanced},
	author={Tomberg, Artour},
	journal={Advances in Math.},
	volume={280},
	pages={282--300},
	year={2015},
	publisher={Elsevier}
}

@article{poondeformations,
	title={Deformations of hypercomplex structures},
	author={Pedersen, Henrik and Poon, Yat Sun},
	journal={J. fur die Reine und Angewandte Mathematik},
	volume={499},
	pages={81--99},
	year={1998},
	publisher={Walterde Gruyter GmbH}
}

@article{campana1991twistor,
	title={On twistor spaces of the class $\mathcal{C}$},
	author={Campana, Fr{\'e}d{\'e}ric},
	journal={J. of Diff. Geom.},
	volume={33},
	number={2},
	pages={541--549},
	year={1991},
	publisher={Lehigh University}
}

@book{kollar2013rational,
  title={Rational curves on algebraic varieties},
  author={Koll{\'a}r, J{\'a}nos},
  volume={32},
  year={2013},
  publisher={Springer Science \& Business Media}
}

\end{document}